\theoremstyle{definition}
\newtheorem{definition}{Definition}[section]
\newcommand{\F}{\mathbb{F}}
\newcommand{\Q}{\mathbb{Q}}
\newcommand{\C}{\mathbb{C}}
\newcommand{\N}{\mathbb{N}}
\newcommand{\Z}{\mathbb{Z}}
\newcommand{\R}{\mathbb{R}}
\newcommand{\HH}{\mathbb{H}}
\newcommand{\st}{{|}}
\newtheorem{theorem}{Theorem}[section]
\newtheorem{remark}[theorem]{Remark}
\newtheorem{proposition}[theorem]{Proposition}
\newtheorem{lemma}[theorem]{Lemma}
\newcommand{\ord}{{\rm ord}}
\newcommand{\id}{{\rm id}}
\title{On a problem of Amitsur and Small}
\author{Elad Paran}
\date\today
\begin{document}


\begin{abstract} We construct an example of a division ring $D$ and a maximal left ideal $M$ in the polynomial ring $D[x,y]$ in two central variables over $D$, such that the intersection $M \cap D[x]$ is not a maximal left ideal in $D[x]$. This resolves a ring-theoretic problem of Amitsur and Small raised in 1978.\end{abstract}

\maketitle


%

\section{Introduction}

Let $D$ be a division ring and let $M$ be a maximal left ideal in the ring $D[x_1,\ldots,x_n]$ of polynomials in $n$ central variables over $D$. In \cite{AmitsurSmall1978}, Amitsur and Small prove a form of Nullstellensatz for such ideals: The left quotient module $D[x_1,\ldots,x_n]/M$ is finite over $D$. A key lemma in their proof states that the intersection of $M$ and $D[x_k]$ is non-zero, for any $1 \leq k \leq n$. In the case where $D$ is a field, one can of course say more: The intersection $M \cap D[x_k]$ is a maximal ideal. This is a classical fact, essential to dimension theory. However, in the case where $D$ is an arbitrary division ring, Amitsur and Small write ``We remark that we are unable to show that maximal left ideals in $D[x_1,\ldots,x_n]$ intersect $D[x_1,\ldots,x_k]$, $k<n$, in maximal or even semi-maximal left ideals" \cite[p.~356]{AmitsurSmall1978}. The second part of this problem was solved by Small and Robson in \cite[Proposition 5.3]{RobsonSmall1981}, who showed that if $M$ is a semi-maximal left ideal in $D[x_1,\ldots,x_n]$ then $M \cap D[x_1,\ldots,x_k]$ is semi-maximal for any $1 \leq k \leq n$ (see also \cite[Theorem 6.8, p.~360]{RobsonMc}). However, the first part of the problem (whether maximal ideals contract to maximal ideals) remained unresolved by the mention works. Some related results are given by Resco in \cite{Resco1980}, who notes that ``It remains unknown, however, whether a maximal right ideal need contract to a maximal right ideal" \cite[p.~70]{Resco1980}\footnote{Some of the mentioned papers work with left ideals, and some with right ideals, but of course, the two notions are interchangeable.}. Years later, in a paper of Rowen from 1995, he mentions \cite[p.~2272]{Rowen1995} that the solution to this question is negative ``as shown recently by Amitsur and Small". However, no reference is given by Rowen, and there does not seem to be a paper from that period containing a solution. The author had written to Small, who does not recall that he or Amitsur had solved this problem, and to Rowen, who also does not recall what the mentioned sentence in his paper refers to. (Notably, Amitsur had died at September 1994, the same month when Rowen's paper was submitted for publication.) 

In this note we resolve the problem of Amitsur and Small (or re-discover its solution, had Amitsur and Small already found it), by providing an example of a division ring $D$ and a maximal left ideal $M$ in $D[x,y]$, such that $M \cap D[x]$ is not maximal. This example is described in \S\ref{sec:counter} below, where a simple recipe for constructing such examples is given by Proposition \ref{main}. Then in \S\ref{null} we discuss some connections of this problem to recent works, leading to several natural follow-up problems.



\section*{Acknowledgement} The author wishes to thank Lance Small and Louis Rowen for their input regarding the topic of this paper. 

\section{Maximal left ideals that contract to non-maximal left ideals}\label{sec:counter}

Throughout this note, given a division ring $D$, we denote by $D[x_1,\ldots,x_n]$ the ring of polynomials in $n$ central variables over $D$, and in the case of one or two variables, we shall write $D[x]$ or $D[x,y]$, respectively. Given a polynomial $f = f_nx^n+f_{n-1}x^{n-1}+\ldots+f_1x+f_0 \in D[x]$ and an element $a \in D$, we denote by $f(a)$ the usual substitution $f(a) = f_na^n+f_{n-1}a^{n-1}+\ldots+f_1a+f_0$. It is well-known that the substitution map $f \mapsto f(a)$ is generally not a homomorphism from $D[x]$ to $D$, however it satisfies the following ``product formula": If $f,g \in D[x]$ then $(fg)(a) = 0$ if $g(a) = 0$, and $(fg)(a) = f\big(a^{g(a)}\big)g(a)$ if $g(a) \neq 0$ , where $a^{g(a)}$ denotes the conjugation $g(a)ag(a)^{-1}$ (see \cite[Proposition 16.3, p.~263]{Lam91}). 

We say that $a \in D$ is a zero of $f \in D[x]$ if $f(a) =0$. This is equivalent to $x-a$ being a right-hand factor of $f$ \cite[Proposition 16.2, p.~262]{Lam91}.

We shall need the following lemma:

\begin{lemma}\label{proper}
    Let $a,b,c$ be elements of a division ring $D$ such that $(ab)c=c(ab)$ and $(a+b)c=c(a+b)$. Then the left ideal in $D[x,y]$ generated by $(x-a)(x-b),y-c$ is a proper left ideal. 
\end{lemma}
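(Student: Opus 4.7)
My plan is to build a nonzero cyclic left $D[x,y]$-module on which both generators of the proposed ideal act by zero on some vector; this forces the ideal to be proper. Concretely, I would take
\[
V \;:=\; D[x]/L, \qquad L \;:=\; D[x]\cdot (x-a)(x-b).
\]
Since $(x-a)(x-b) = x^2 - (a+b)x + ab$ is monic of degree $2$ and $x$ is central, right Euclidean division by it works inside $D[x]$, so $V$ is free of rank $2$ as a left $D$-module with basis $\{\bar 1,\bar x\}$; in particular $\bar 1 \neq 0$.

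The heart of the argument is to equip $V$ with an action of the second variable. I want $y$ to act by right multiplication by $c$, namely $y\cdot \bar f := \overline{fc}$. For this to be well defined I must show that right multiplication by $c$ preserves $L$. A direct computation, using that $x$ commutes with every element of $D$ together with the two hypotheses $(a+b)c = c(a+b)$ and $(ab)c = c(ab)$, yields
\[
(x-a)(x-b)\cdot c \;=\; c x^{2} - (a+b) c x + ab c \;=\; c\cdot (x-a)(x-b)
\]
in $D[x]$. Therefore $L\cdot c \subseteq L$, and the map $\bar f \mapsto \overline{fc}$ descends to an endomorphism of $V$. This endomorphism is left-$D[x]$-linear by construction, so it commutes both with the scalar action of $D$ and with the action of $x$; declaring it to be the action of $y$ thus promotes $V$ to a left $D[x,y]$-module.

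It remains only to observe that $(x-a)(x-b)\cdot \bar 1 = 0$ by the definition of $V$, and $(y-c)\cdot \bar 1 = \bar c - c\bar 1 = 0$ by the definition of the $y$-action. Hence the left ideal generated by $(x-a)(x-b)$ and $y-c$ annihilates the nonzero vector $\bar 1\in V$, which precludes it from containing $1$, and so this ideal is proper.

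The only step with genuine content is the boxed identity $(x-a)(x-b)c = c(x-a)(x-b)$, and it is exactly here that both commutation hypotheses on $a,b,c$ get used, one for each of the non-leading coefficients of $(x-a)(x-b)$. Everything else — the module structure, the basis, and the final annihilation — is bookkeeping.
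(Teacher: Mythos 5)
Your argument is correct, and it takes a genuinely different route from the paper. The paper argues by contradiction: it assumes an equation $f\cdot(x-a)(x-b)+g\cdot(y-c)=1$, passes to the ring $D(x)[y]$ over the Ore quotient division ring $D(x)$, applies the noncommutative substitution map $y\mapsto c$ together with the product formula for evaluation, and uses the two commutation hypotheses to show that $c$ is fixed under conjugation by $(x-a)(x-b)$ in $D(x)$, from which a degree contradiction falls out. Your proof instead exhibits a nonzero cyclic left $D[x,y]$-module $V = D[x]/D[x](x-a)(x-b)$ on which the would-be unit ideal acts by zero on $\bar 1$: the $y$-action is right multiplication by $c$, and its well-definedness is exactly the identity $(x-a)(x-b)c = c(x-a)(x-b)$, which encodes both hypotheses. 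This is more elementary (no Ore localization, no evaluation formula) and more structural — it produces an explicit annihilated module rather than a degree obstruction — and it makes transparent why the two hypotheses on $a,b,c$ are what they are: they are precisely the conditions for $c$ to commute with the polynomial $(x-a)(x-b)$ coefficientwise. Both proofs are sound; yours is the one a reader who knows that proper left ideals are exactly the annihilators of nonzero elements of nonzero modules would reach for, while the paper's is closer in spirit to the evaluation-theoretic toolkit used elsewhere in the article.
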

\begin{proof}
    Suppose to the contrary that there exist $f,g \in D[x,y]$ such that $$f\cdot (x-a)(x-b)+g \cdot (y-c) =1.$$
The polynomial ring $D[x]$ is an Ore domain, hence admits a unique quotient division ring, which we denote by $D(x)$. Let us view the above presented equality as taking place in the ring $D(x)[y]$. Consider the substitution map $y \mapsto c$ from $D(x)[y]$ to $D(x)$. Applying this map to the presented equality, we get, using the substitution formula mentioned above, that $$f(c^{(x-a)(x-b)}) \cdot (x-a)(x-b) = 1.$$
    (Let us emphasize that in this equality $(x-a)(x-b)$ serves as a ``constant" -- an element of the division ring $D(x)$.) Now, by our assumptions, $c$ commutes with $a+b$ and with $ab$, hence $c$ commutes with $(x-a)(x-b)=x^2-(a+b)x+ab$. Thus in the ring $D(x)$, we have $c^{(x-a)(x-b)} = c$. But this means that $f(c^{(x-a)(x-b)})$, which a priori is an element of $D(x)$, equals $f(c)$, which belongs to $D[x]$. This contradicts the last presented equality, since then $f(c^{(x-a)(x-b)}) \cdot (x-a)(x-b)=f(c)\cdot (x-a)(x-b) \in D[x]$ is either $0$ or of degree at least $2$ in $x$. 
\end{proof}

Given a division ring $D$ and $n \geq 1$, we shall denote by $D_c^n$ the subspace 

$$D_c = \{(a_1,\ldots,a_n) \in D^n \st a_ia_j = a_ja_i \hbox{ for all } 1\leq i,j \leq n\}$$
of $D^n$. 

\begin{lemma}\label{whole}
Let $a_1,\ldots,a_n$ be elements of a division ring $D$. Then the left ideal in $D[x_1,\ldots,x_n]$ generated by $x_1-a_1,\ldots,x_n-a_n$ is a proper left ideal if and only if $(a_1,\ldots,a_n)\in D_c^n$. Moreover, if this left ideal is proper, then it is maximal.
\end{lemma}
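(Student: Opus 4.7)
The plan is to prove both directions by constructing an explicit left $D[x_1,\ldots,x_n]$-module structure on $D$ and by computing a single commutator inside the left ideal $I := \sum_{k=1}^n D[x_1,\ldots,x_n](x_k - a_k)$.

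For the direction that $(a_1,\ldots,a_n) \in D_c^n$ implies $I$ is proper and maximal, I would give $D$ a left $D[x_1,\ldots,x_n]$-module structure by letting $D$ act on itself by left multiplication and declaring each central variable to act on the right: $x_i \cdot m := ma_i$. Verifying that this is a well-defined module structure reduces to two compatibilities: $x_i$ must commute with every $d \in D$ in its action (both sides equal $dma_i$ by associativity in $D$), and the $x_i$ must commute with each other ($ma_ja_i = ma_ia_j$ for every $m \in D$); the latter is exactly where the hypothesis $(a_1,\ldots,a_n) \in D_c^n$ is used. Under this action, $(x_i - a_i) \cdot 1_D = 0$ for all $i$, so $I \cdot 1_D = 0$, whence $1 \notin I$ and $I$ is proper. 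For maximality I would examine the map $\varphi: D[x_1,\ldots,x_n] \to D$, $f \mapsto f \cdot 1_D$, which is a surjective homomorphism of left $D[x_1,\ldots,x_n]$-modules with $I \subseteq \ker\varphi$. Using the relations $x_i \equiv a_i \pmod I$ together with the centrality of the $x_i$ (which lets one push every variable past scalars), every polynomial can be reduced modulo $I$ to an element of $D$, so $\ker\varphi = I$ and $D[x_1,\ldots,x_n]/I \cong D$ as left modules. As $D$ is simple over itself, the quotient is simple, hence $I$ is maximal.

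For the converse, I would show that if any two of the $a_i$ fail to commute then $I$ is already the whole ring. The key step is the short calculation
\[
(x_i - a_i)(x_j - a_j) - (x_j - a_j)(x_i - a_i) = a_ia_j - a_ja_i,
\]
which is valid in $D[x_1,\ldots,x_n]$ precisely because $x_i$ and $x_j$ are central; all mixed terms $x_ix_j$, $a_jx_i$, $a_ix_j$ cancel, and only the constant commutator survives. The left-hand side lies in $I$ by construction, so if $a_ia_j \neq a_ja_i$ then this commutator is a nonzero element of $D$, hence a unit of $D$, yielding $1 \in I$ and therefore $I = D[x_1,\ldots,x_n]$.

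The only step requiring any care is the claim that $D[x_1,\ldots,x_n]/I$ is represented by elements of $D$; this is where the centrality of the indeterminates is essential, since it allows one to collect every occurrence of $x_i$ to the right of each monomial before substituting $a_i$ without having to worry about the noncommutativity of $D$. Everything else reduces to routine manipulation of the generators $x_k - a_k$.
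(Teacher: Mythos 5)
Your proof is correct and complete, whereas the paper itself establishes this lemma only by citing \cite[Lemma 2.1 and Proposition 2.2]{AP20} (with a remark that those quaternionic arguments carry over verbatim to any division ring). You replace the citation with a self-contained, transparent argument, and both halves are sound. For the ``if'' direction, equipping $D$ with the twisted left $D[x_1,\ldots,x_n]$-module structure (left $D$-action by left multiplication, each $x_i$ acting by right multiplication by $a_i$) is exactly the right device: the two compatibility checks you identify — $D$-linearity of each $x_i$-action, which is automatic, and pairwise commutativity of the $x_i$-actions, which is precisely the $D_c^n$ condition — are sufficient because $D[x_1,\ldots,x_n]$ is presented over $D$ by those centrality relations. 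The resulting evaluation map $\varphi$ is a surjective left-module homomorphism with kernel $I$ (collecting all variables to the right of the scalar in each monomial, as you note, justifies $\ker\varphi = I$), and simplicity of $D$ over itself gives maximality. For the ``only if'' direction, the commutator $(x_i-a_i)(x_j-a_j)-(x_j-a_j)(x_i-a_i)=a_ia_j-a_ja_i$ lies in $I$ because $I$ is a left ideal and each product has a generator as its right-hand factor; if it is nonzero it is a unit, so $I$ is the whole ring. This is a clean and elementary route, arguably preferable to a citation because it makes explicit exactly where the hypothesis $(a_1,\ldots,a_n)\in D_c^n$ is used, and it would serve well as a drop-in replacement for the paper's one-line reference.
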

\begin{proof}
The first claim is given by in \cite[Lemma 2.1]{AP20}, and the second claim is given by \cite[Proposition 2.2]{AP20} -- both claims are formulated there for the case where $D$ is the real quaternion algebra, but their proofs hold verbatim for any division ring.
\end{proof}

We remark that by the above lemma, the points of $D_c^n$ are precisely the points for which substitution in polynomials in $D[x_1,\ldots,x_n]$ is well-defined, as discussed in \cite[\S2]{AP20}.

Next, we have:

\begin{proposition}\label{main}
    Let $D$ be a division ring and let $a,b,c$ be elements of $D$ satisfying the following conditions:

\begin{enumerate}[label=\alph*)]
    \item $abc = cab$.
    \item $c(a+b) = (a+b)c$.
    \item Every zero in $D$ of the polynomial $(x-a)(x-b)$ does not commute with $c$.

\end{enumerate}

Then the left ideal $M$ generated by $(x-a)(x-b),y-c$ in $D[x,y]$ is a maximal left ideal, whose intersection with $D[x]$ is not a maximal left ideal in $D[x]$.
\end{proposition}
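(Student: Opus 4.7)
The plan is to show that the quotient $V := D[x,y]/M$ is a left $D$-vector space of dimension exactly $2$, with basis represented by $1$ and $x$. Once this is established, maximality of $M$ follows quickly, and the equality $M \cap D[x] = D[x] \cdot (x-a)(x-b)$ drops out by a dimension count; the latter ideal is plainly non-maximal, being strictly contained in $D[x] \cdot (x-b)$.

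First I reduce a general $p = \sum_i p_i(x) y^i \in D[x,y]$ modulo $M$. Since $y$ is central it commutes with $c$, so $y^i - c^i = \sum_{j=0}^{i-1} y^{i-1-j} c^j (y-c)$, which gives $p - \sum_i p_i(x) c^i \in D[x,y] \cdot (y-c) \subset M$. Thus every class in $V$ is represented by an element of $D[x]$, and left division in $D[x]$ by the monic polynomial $(x-a)(x-b)$ further reduces this to some $t_0 + t_1 x$ with $t_0, t_1 \in D$. Hence $[1]$ and $[x]$ span $V$ over $D$.

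For maximality, note first that $M$ is proper by conditions (a), (b) and Lemma \ref{proper}. Suppose $N$ is a left ideal with $M \subsetneq N$, and reduce some $h \in N \setminus M$ to a nonzero $t_0 + t_1 x \in N$ as above. If $t_1 = 0$ then the unit $t_0$ lies in $N$, so $N = D[x,y]$. Otherwise $x - \alpha \in N$ for $\alpha = -t_1^{-1} t_0$, and left division using centrality of $x$ yields
\[
(x-a)(x-b) = (x + \alpha - a - b)(x - \alpha) + r, \qquad r := \alpha^2 - (a+b)\alpha + ab,
\]
putting $r \in D \cap N$. If $r \neq 0$, again $N = D[x,y]$. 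If $r = 0$ then $\alpha$ is a zero of $(x-a)(x-b)$, and condition (c) forces $\alpha c - c\alpha \neq 0$. Both $(x-\alpha)(y-c) \in N$ (since $y - c \in N$) and $(y-c)(x-\alpha) \in N$ (since $x - \alpha \in N$), and using that $x$ and $y$ are central in $D[x,y]$ a direct expansion gives
\[
(x-\alpha)(y-c) - (y-c)(x-\alpha) = \alpha c - c\alpha,
\]
a nonzero scalar in $N$, so $N = D[x,y]$. This proves $M$ is maximal. Applying the same argument with $N = M$ forces $t_0 = t_1 = 0$ whenever $t_0 + t_1 x \in M$, so $[1], [x]$ are $D$-linearly independent in $V$, giving $\dim_D V = 2$.

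Finally, the inclusion $D[x] \hookrightarrow D[x,y]$ induces a surjection $D[x]/D[x](x-a)(x-b) \twoheadrightarrow D[x]/(M \cap D[x]) \cong V$ of left $D$-vector spaces; both have dimension $2$ with basis $[1], [x]$, so this is an isomorphism. Therefore $M \cap D[x] = D[x] \cdot (x-a)(x-b)$, which is strictly contained in $D[x] \cdot (x-b)$, itself a proper left ideal; hence $M \cap D[x]$ is not maximal. I expect the main obstacle to be the subcase $r = 0$ of the maximality argument: condition (c) must be converted into an actual unit inside $N$, and the clean way to do this is via the commutator $(x-\alpha)(y-c) - (y-c)(x-\alpha)$, whose simplification to $\alpha c - c\alpha$ depends on centrality of $x$ and $y$ despite $\alpha$ and $c$ not commuting.
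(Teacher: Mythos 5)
Your proof is correct, and it takes a genuinely different route from the paper's in two places, both worth noting. For the maximality argument in the case $r=0$ (the paper's ``$v$ is a zero of $(x-a)(x-b)$'' case), the paper reduces to the ideal $\langle x-v,\,y-c\rangle$ and invokes Lemma \ref{whole}, whereas you instead exhibit the nonzero scalar directly via the commutator $(x-\alpha)(y-c)-(y-c)(x-\alpha)=\alpha c-c\alpha$, an element manifestly in $N$. This is a cleaner, self-contained substitute for Lemma \ref{whole} in this one-variable situation. For the determination of $M\cap D[x]$, the paper argues through the structure of $D[x]$ as a principal left ideal domain (finding the monic generator $p$, ruling out $\deg p \le 1$ by a casewise contradiction), while you deduce $M\cap D[x]=D[x](x-a)(x-b)$ from a dimension count on $V=D[x,y]/M$, after showing $\dim_D V=2$ by rerunning the maximality argument with $N=M$. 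Both roads arrive at the same place; the paper's relies on a small amount of PLID structure theory, while yours replaces that with an explicit linear-algebraic computation (and, as a bonus, identifies $V$ concretely as $D\cdot[1]\oplus D\cdot[x]$, which recovers Amitsur--Small finiteness in this example). One minor remark: your identity $(x-a)(x-b)=(x+\alpha-a-b)(x-\alpha)+r$ with $r=\alpha^2-(a+b)\alpha+ab$ exactly identifies $r$ with the substitution $(x-a)(x-b)|_{x=\alpha}$, which is what makes condition (c) applicable precisely when $r=0$ — this is the same dichotomy the paper handles via coprimality of $x-v$ and $(x-a)(x-b)$, just made explicit.
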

\begin{proof}
 By the preceding lemma $M$ is a proper left ideal in $D[x,y]$. To show that it is maximal, suppose that $f \in D[x,y]$ does not belong to $M$. We must show that $M+D[x,y]f = D[x,y]$.  

By left-hand division with remainder by $y-c$ in the ring $D[x,y] = D[x][y]$, we may write $f = g \cdot (y-c)+h$, for some $g \in D[x,y]$ and $h \in D[x]$. Similarly, by division with remainder in $D[x]$ by $(x-a)(x-b)$, we may write $h = p(x-a)(x-b)+ux-v$ for some $u,v \in D$. Then it suffices to prove that $M+D[x,y](ux-v) = D[x,y]$. Suppose first that $u = 0$. Then if $v = 0$ we get that $f = g\cdot(y-c)+p\cdot(x-a)(x-b)$ belongs to $M$, a contradiction, and if $v \neq 0$ we have $$M+D(x,y)(ux-v) \supseteq D[x,y](ux-v) = D[x,y]v = D[x,y].$$

Next, suppose that $u \neq 0$. Then we may assume without loss of generality that $u = 1$ (by replacing $ux-v$ with $ux-u^{-1}v$, if necessary). If $v$ is a zero of $(x-a)(x-b)$, then $x-v$ is a right-hand factor of $(x-a)(x-b)$, hence$$M+D[x,y](x-v) = \big(D[x,y](y-c)+D[x,y](x-a)(x-b)\big)+D[x,y](x-v)$$ $$ = D[x,y](y-c)+\big(D[x,y](x-a)(x-b)+D[x,y](x-v)\big)$$ $$=D[x,y](y-c)+D[x,y](x-v).$$
However, by our assumptions we have $vc \neq vc$, hence by Lemma \ref{whole} the presented ideal is the whole ring $D[x,y]$, as needed. Next, if $v$ is not a zero of $(x-a)(x-b)$, then $x-v$ is coprime to $(x-a)(x-b)$ in $D[x]$, which means that $1 \in D[x](x-a)(x-b)+D[x](x-v) \subseteq M+D[x,y](x-v)$, hence $M+D[x,y](x-v) = D[x,y]$. We conclude that $M$ is a maximal left ideal in $D[x,y]$.

Now, note that the intersection of $M$ with $D[x]$ is generated, as a left ideal, by $(x-a)(x-b)$. Indeed, since $D[x]$ is a principal left ideal domain and $(x-a)(x-b) \in M$, we must have $M \cap D[x] = D[x]p$, for some right-hand factor $p$ of $(x-a)(x-b)$, and we may assume that $p$ is monic. We cannot have $p = 1$, since then $M = D[x,y]$. If $p$ is a linear factor $x-v$, then $v$ is a zero of $(x-a)(x-b)$, which again by our assumptions means that $vc \neq cv$ and hence  $D[x,y](x-v)+D[x,y](y-c) = D[x,y]$. But $D[x,y](x-v)+D[x,y](y-c) = D[x,y]p+D[x,y](y-c) \subseteq M$, hence $M = D[x,y]$, a contradiction. Thus $p$ cannot be linear, and hence we must have $p = (x-a)(x-b)$. But this means that $M \cap D[x] = D[x]p= D[x](x-a)(x-b)$ is not a maximal ideal in $D[x]$, since clearly it is strictly contained in $D[x](x-b)$. \end{proof}

Given a field $K$ and an automorphism $\sigma$ of $K$, let $K((t,\sigma))$ denote the corresponding skew Laurent power series ring over $K$, whose elements are Laurent power series in the variable $t$, and multiplication is determined by the rule $tu = u^\sigma t$ for all $u \in K$ (the so-called ``Hilbert twist"). It is well-known that such a ring $K((t,\sigma))$ is a division ring \cite[p.~23]{Krylov}.






\begin{proposition}\label{prop:example} Let $K$ be a field, equipped with an automorphism $\sigma$, and let $c$ be an element of $K$ such that $c^\sigma \neq c$ and $c^{\sigma^2}=c$. Let $D = K((t,\sigma))$ and let $M$ be the left ideal in $D[x,y]$ generated by $(x+t)(x-t),y-c$. Then $M$ is a maximal left ideal in $D[x,y]$ such that $D[x] \cap M$ is not maximal in $D[x]$.  
\end{proposition}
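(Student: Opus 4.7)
The plan is to apply Proposition \ref{main} with $a=-t$, $b=t$, and $c$ the given element of $K$, noting that $(x-a)(x-b)=(x+t)(x-t)$ matches the generator in the statement. Condition (b) is immediate since $a+b=0$, and condition (a) requires $t^2 c = c t^2$, which follows from the Hilbert-twist rule: $t^2 c = c^{\sigma^2} t^2 = c t^2$ by hypothesis on $c$. The entire substance of the proof is therefore verifying condition (c): every zero $z \in D$ of $x^2-t^2$ fails to commute with $c$.

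To analyze the zeros, I would write an arbitrary $z \in D$ as a Laurent series $z=\sum_{n \geq n_0} a_n t^n$ with $a_{n_0}\neq 0$ and square it using $t^n u = u^{\sigma^n} t^n$. The leading term of $z^2$ is $a_{n_0} a_{n_0}^{\sigma^{n_0}} t^{2n_0}$, and since $D$ is a division ring this coefficient is nonzero. Comparing to $z^2 = t^2$ forces $2n_0=2$, so $n_0=1$ and $a_1 \neq 0$. Thus every such zero has a nonzero coefficient at $t$.

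Next I would compute the commutator. Using $t^n c = c^{\sigma^n} t^n$, one gets
\[
zc - cz \;=\; \sum_{n \geq 1} a_n\bigl(c^{\sigma^n} - c\bigr) t^n.
\]
Since $c^{\sigma^2}=c$, the automorphism $\sigma$ has order dividing $2$ on $c$, so $c^{\sigma^n}=c$ when $n$ is even and $c^{\sigma^n}=c^\sigma \neq c$ when $n$ is odd. In particular, the coefficient of $t^1$ in $zc-cz$ is $a_1(c^\sigma - c)$, which is nonzero because $a_1 \neq 0$ and $c^\sigma \neq c$. Therefore $zc \neq cz$, establishing condition (c).

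The main (and really, only) obstacle is the power-series leading-term calculation, which hinges on correctly handling the skew multiplication; once this gives $n_0=1$, the conclusion follows immediately. Having verified all three hypotheses of Proposition \ref{main}, I would conclude that $M$ is a maximal left ideal of $D[x,y]$ whose intersection with $D[x]$ is not maximal.
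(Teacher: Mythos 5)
Your proof is correct and takes essentially the same approach as the paper: verify the three hypotheses of Proposition \ref{main} with $a=-t$, $b=t$, with the only nontrivial step being the leading-order argument showing every square root of $t^2$ in $D$ has a nonzero coefficient at $t^1$, and then comparing the $t^1$-coefficients of $zc$ and $cz$. The paper's verification of condition (c) is just a more terse version of your computation.
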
 \begin{proof}
Let $a = -t$, $b =t$. Then $$(ab)c = -t^2c = -c^{\sigma^2} t^2 = -ct^2 = c(ab)$$ and $c\cdot (a+b) = c\cdot 0 = 0 = (a+b)\cdot c$. 
  
Suppose now that $f = \sum_{i=m}^\infty f_i t^i \in D$ is a zero of $(x-a)(x-b) = (x+t)(x-t) = x^2-t^2$ (with $m \in \Z$, $f_m \neq 0$). Then from $f^2 = t^2$ we get, by comparing coefficients, that $m = 1$, $f_1 \neq 0$. We claim that $f$ does not commute with $c$. This follows by comparing the coefficient of $t$ in $fc$ and $cf$: In $fc$ this coefficient is $c^\sigma f_1$ while in $cf$ this coefficient is $cf_1$, and $c^\sigma f_1 \neq cf_1$ since $f_1 \neq 0$ and $c^\sigma \neq c$.

We have thus established all of the conditions of Proposition \ref{main}, from which the claim follows.
\end{proof}

Using Proposition \ref{prop:example} one can generate various concrete negative examples for the Amitsur-Small problem. For example, take $K$ as the algebraic closure $\bar{\F}_2$ of the field $\F_2$ of two elements, let $\sigma$ denote the Frobenius automorphism $a \mapsto a^2$ of $\bar{\F}_2$ and let $c \neq 1$ be an element of $K$ satisfying $c^3 = 1$. Then $c^{\sigma^2} = c^4 = c$ and $c^\sigma = c^2 \neq c$. Or, let $K$ be the field of complex numbers, let $\sigma$ denote complex conjugation, and take $c = \mathrm{i} = \sqrt{-1}$. Note that in the first example, since the Frobenius automorphism is of infinite order, the ring $D$ is of infinite dimension over its center $\F_2$, while in the second example the ring $D$ is centrally finite -- it is a quaternion algebra over its center $\R((t^2))$, with basis $1,\mathrm{i},t,\mathrm{i}t$.

\section{On the Nullstellensatz for algebraically closed division rings}

Let $K$ be an algebraically closed field. The classical Nullstellensatz, in its concrete form, describes the maximal ideals in the polynomial ring $K[x_1,\ldots,x_n]$: Those are precisely the ideals of the form $\langle x-a_1,\ldots,x-a_n \rangle$, for any point $(a_1,\ldots,a_n)$ in $K^n$. Now let $D$ be a division ring, and let us say that $D$ is {\it algebraically closed}, if every polynomial in $D[x]$ has a zero in $D$. It is then natural to ask -- what are the maximal left ideals in $D[x_1,\ldots,x_n]$, if $D$ is algebraically closed?

In the case where $D = \HH$ is the real quaternion algebra, a theorem of Niven and Jacobson \cite[Theorem 1]{Niv41} states that $D$ is algebraically closed, and then we have the concrete Nullstellensatz given in \cite{AP20}: The maximal left ideals in $D[x_1,\ldots,x_n]$ are precisely those of the form $\langle x-a_1,\ldots,x-a_n \rangle$, for points $(a_1,\ldots,a_n) \in D_c^n$. Some variations and refinements of this result are given in \cite{Aryapoor2024}, \cite{GSV24}, \cite{AP24}. These works are all focused on the real quaternion algebra $\HH$, and their techniques of proof heavily rely upon the explicit presentation of this ring.

For general division rings, we introduce the following terminology:

\begin{definition} Let $D$ be a division ring. We shall say that $D$ is a {\it Nullstellensatz ring}, if the maximal left ideals in $D[x_1,\ldots,x_n]$ are those of the form $\langle x-a_1,\ldots,x-a_n \rangle$, for points $(a_1,\ldots,a_n) \in D_c^n$.
\end{definition}

In other words, a Nullstellensatz ring is a ring for which the natural correspondence between maximal left ideals and points in $D_c^n$ holds, and the quaternion algebra $\HH$ is an example for such a ring, as mentioned above. Clearly, every Nullstellensatz ring must be algebraically closed (since an irreducible polynomial $p \in D[x]$ generates a maximal left ideal $D[x]p$), but we do not know whether the converse holds. However, this problem can be rephrased in terms of the Amitsur-Small problem discussed in this paper: 

\begin{definition}
     Let $D$ be a division ring. We shall say that $D$ is an {\it Amitsur-Small ring}, if for every $1 \leq k \leq n$ and for every maximal left ideal $M$ in $D[x_1,\ldots,x_n]$, the intersection $M \cap D[x_1,\ldots,x_k]$ is a maximal left ideal in $D[x_1,\ldots,x_k]$.
\end{definition}

We then have:

\begin{proposition}\label{null}
    Let $D$ be an algebraically closed division ring. The following are equivalent:
    \begin{enumerate}
        \item The ring $D$ is a Nullstellensatz ring.
        \item  The ring $D$ is an Amitsur-Small ring.
    \end{enumerate}
\end{proposition}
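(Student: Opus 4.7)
The proposition asserts the equivalence of an explicit Nullstellensatz description of maximal left ideals with the (a priori weaker) Amitsur--Small contraction property, under the standing assumption that $D$ is algebraically closed. My plan is to verify each implication separately. The forward direction $(1) \Rightarrow (2)$ reduces to an elementary computation using the explicit form of maximal ideals together with Lemma \ref{whole}. The reverse direction $(2) \Rightarrow (1)$ proceeds by induction on $n$, where algebraic closedness supplies linear factors one variable at a time and the Amitsur--Small property produces the individual coordinates of the generating tuple.

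For $(1) \Rightarrow (2)$, I would let $M$ be a maximal left ideal in $D[x_1,\ldots,x_n]$ and fix $k$. By hypothesis $M = \langle x_1-a_1,\ldots,x_n-a_n\rangle$ for some $(a_1,\ldots,a_n) \in D_c^n$, and the subtuple $(a_1,\ldots,a_k)$ lies in $D_c^k$. Lemma \ref{whole} then shows that $N := \langle x_1-a_1,\ldots,x_k-a_k\rangle$ is a maximal left ideal in $D[x_1,\ldots,x_k]$. Since $N \subseteq M \cap D[x_1,\ldots,x_k]$ and the latter is a proper left ideal of $D[x_1,\ldots,x_k]$ (because $1 \notin M$), maximality of $N$ forces $M \cap D[x_1,\ldots,x_k] = N$, which is maximal.

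For $(2) \Rightarrow (1)$, I would proceed by induction on $n$. The base case $n = 1$ is immediate: $D[x]$ is a principal left ideal domain, so every maximal left ideal has the form $D[x]p$ for some irreducible $p$; since $D$ is algebraically closed, $p$ has a zero $a \in D$, making $x-a$ a right-hand factor of $p$, and irreducibility forces $M = D[x](x-a)$. For the inductive step, let $M$ be maximal in $D[x_1,\ldots,x_n]$. The Amitsur--Small property ensures that $M \cap D[x_1,\ldots,x_{n-1}]$ is a maximal left ideal in $D[x_1,\ldots,x_{n-1}]$, so by the inductive hypothesis it equals $\langle x_1-a_1,\ldots,x_{n-1}-a_{n-1}\rangle$ for some $(a_1,\ldots,a_{n-1}) \in D_c^{n-1}$. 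Applying the ring automorphism of $D[x_1,\ldots,x_n]$ that swaps $x_1$ and $x_n$ and invoking the Amitsur--Small property once more, together with the base case, yields some $a_n \in D$ with $M \cap D[x_n] = D[x_n](x_n - a_n)$.

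The main obstacle --- and the only step that is not purely formal --- is upgrading the separate data $(a_1,\ldots,a_{n-1}) \in D_c^{n-1}$ and $a_n \in D$ into a single tuple $(a_1,\ldots,a_n) \in D_c^n$. For this, I use that the variables $x_1,\ldots,x_n$ are central and pairwise commute in the polynomial ring, so for each $i < n$ a direct expansion collapses the commutator $(x_n-a_n)(x_i-a_i) - (x_i-a_i)(x_n-a_n)$ to the constant $a_n a_i - a_i a_n \in D$. This element lies in $M$, but a nonzero constant would be invertible and contradict the properness of $M$; hence $a_n a_i = a_i a_n$ for every $i < n$, and $(a_1,\ldots,a_n) \in D_c^n$. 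Lemma \ref{whole} then produces the maximal left ideal $\langle x_1-a_1,\ldots,x_n-a_n\rangle \subseteq M$, which must equal $M$ by maximality, completing the induction.
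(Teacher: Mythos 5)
Your proof is correct and follows the paper closely on $(1)\Rightarrow(2)$, but takes a genuinely different route on $(2)\Rightarrow(1)$. The paper works with all variables simultaneously: using \cite[Lemma A]{AmitsurSmall1978} and the Amitsur--Small hypothesis, it notes that $M \cap D[x_i]$ is a nonzero maximal left ideal of the PID $D[x_i]$ for \emph{each} $i$ at once, writes $M \cap D[x_i] = D[x_i](x_i - a_i)$ via algebraic closedness, and then invokes the ``only if'' direction of Lemma~\ref{whole} to conclude $(a_1,\ldots,a_n) \in D_c^n$: if it were not, the sub-ideal $\langle x_1-a_1,\ldots,x_n-a_n\rangle$ of $M$ would be all of $D[x_1,\ldots,x_n]$, a contradiction. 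You instead run an induction on $n$ and, for the crucial commutativity step, prove $a_n a_i = a_i a_n$ \emph{directly} by observing that $(x_n-a_n)(x_i-a_i) - (x_i-a_i)(x_n-a_n) = a_n a_i - a_i a_n$ is a constant lying in $M$, hence zero. That commutator calculation is essentially a hands-on re-derivation of the content of Lemma~\ref{whole} in this special case. Both arguments are valid; the paper's version is a little more streamlined since it avoids the induction and leans on the already-established Lemma~\ref{whole} for the $D_c^n$ membership, while yours has the modest virtue of making the commutativity constraint transparently explicit. One implicit point common to both proofs that deserves a word: the Amitsur--Small property is stated for intersections with $D[x_1,\ldots,x_k]$, so using it for $M \cap D[x_i]$ with $i \ne 1$ (or $M \cap D[x_n]$ in your setup) requires the harmless step of permuting variables, which you at least mention explicitly.
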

\begin{proof}
Suppose that $D$ is a Nullstellensatz ring, and let $M = \langle x_1-a_1,\ldots,x_n-a_n \rangle$ be a maximal left ideal in $D[x_1,\ldots,x_n]$ for a suitable point $(a_1,\ldots,a_n) \in D_c^n$. Then for each $1 \leq k \leq n$, we have $x_1-a_1,\ldots,x_k-a_k \in M \cap D[x_1,\ldots,x_k]$. But $\langle x_1-a_1,\ldots,x_k-a_k \rangle$ is a maximal left ideal in $D[x_1,\ldots,x_k]$, by Lemma \ref{whole}, hence we must have equality $M \cap D[x_1,\ldots,x_k] = \langle x_1-a_1,\ldots,x_k-a_k \rangle$, hence $M \cap D[x_1,\ldots,x_k]$ is maximal. Thus $D$ is an Amitsur-Small ring.

Conversely, suppose that $D$ is an Amitsur-Small ring, and let $M$ be a maximal left ideal in $D[x_1,\ldots,x_n]$. By \cite[Lemma A]{AmitsurSmall1978}, for each $1 \leq i \leq n$ we have $M \cap D[x_i] \neq 0$, and since $D[x_i]$ is a left principal ideal domain, we have $M \cap D[x_i] = D[x_i]p_i$, where $p_i$ is a monic polynomial of minimal degree in $x_i$ in $M$. By our assumptions, $D[x_i]p_i$ is a maximal left ideal in $D[x_i]$, hence $p_i$ must be irreducible in $D[x_i]$. But since $D$ is algebraically closed, $p_i$ is right-hand divisible by $x_i-a_i$ for some $a_i \in D$, hence $p_i = x_i-a_i$. Thus $M$ contains the left ideal $\langle x_1-a_1,\ldots,x_n-a_n\rangle$. If $(a_1,\ldots,a_n) \notin D_c^n$ we get, by Lemma \ref{whole}, that $M = D[x_1,\ldots,x_n]$, a contradiction. Thus $(a_1,\ldots,a_n) \in D_c^n$ and hence by Lemma \ref{whole} $\langle x_1-a_1,\ldots,x_n-a_n\rangle$ is a maximal left ideal, hence $M = \langle x_1-a_1,\ldots,x_n-a_n\rangle$. Thus $D$ is a Nullstellensatz ring. 
\end{proof}

As an immediate consequence of Proposition \ref{null}, we deduce that the real quaternion algebra $\HH$ is an example of an Amitsur-Small ring, since it is a Nullstellensatz ring, as discussed above. There are other known examples of algebraically closed non-commutative division rings, the first due to Makar-Limanov in \cite{Makar85}, and a variation of it in \cite{Kolesnikov2000}. These rings satisfy an even stronger property -- every polynomial function over them admits a zero. We do not know whether these rings are Nullstellensatz/Amitsur-Small rings.

\begin{remark}
Since $\HH$ is an Amitsur-Small ring, we deduce that the conditions of Proposition \ref{main} cannot be met with $D = \HH$. It is instructive to see this via a direct argument: Suppose that $a,b,c\in \HH$ satisfy the conditions of Proposition \ref{main}. Then in particular, $c \notin \R$, since $c$ must not commute with the zero $b$ of $p = (x-a)(x-b) = x^2-(a+b)x+ab$. Now, since $ab$ commutes with $c$, $ab$ belongs to the subfield $\R(c)$ of $\HH$. Similarly we have $a+b \in \R(c)$. Thus $p \in \R(c)[x]$. But the subfield $\R(c)$ of $\HH$ is necessarily isomorphic to $\C$, hence there exists a zero of $p$ in $\R(c)$, which of course commutes with $c$, a contradiction.
\end{remark}

We conclude this note with the following questions:
\begin{enumerate}
    \item Is there a simple, general characterization of the Amitsur-Small rings?
    \item  Can one characterize the quaternion algebras that are Amitsur-Small rings? 
    \item Are all algebraically closed division rings Amitsur-Small rings? (Equivalently, are all algebraically closed division rings Nullstellensatz rings?)

\end{enumerate}



\end{document}